\newcommand\ord[1]{[#1]}
\newcommand\cat[1]{\mathcal{#1}}
\newcommand\email[1]{\href{mailto:#1}{\texttt{#1}}}
\DeclarePairedDelimiter\set{\{}{\}}
\DeclarePairedDelimiter\abs{\lvert}{\rvert}
\DeclarePairedDelimiter\ang{\langle}{\rangle}
\DeclareMathOperator\colim{colim}
\DeclareMathOperator\cts{cts}
\DeclareMathOperator\id{id}
\DeclareMathOperator\op{op}
\DeclareMathOperator\yo{\mathsf{y}}
\DeclareMathOperator\Set{\mathsf{Set}}
\DeclareMathOperator\Pos{\mathsf{Pos}}
\DeclareMathOperator\Tos{\mathsf{Tos}}
\newtheorem{theorem}{Theorem}
\newtheorem{corollary}[theorem]{Corollary}
\newtheorem{lemma}[theorem]{Lemma}
\newtheorem{proposition}[theorem]{Proposition}
\theoremstyle{definition}
\newtheorem{definition}[theorem]{Definition}
\newtheorem{example}[theorem]{Example}
\begin{document}

\title{Partial orders are the free conservative cocompletion of total orders}
\author{Calin Tataru \\ \small University of Cambridge \\ \small \email{calin.tataru@cl.cam.ac.uk}}
\maketitle

\begin{abstract}
We show that the category of partially ordered sets $\Pos$ is equivalent to the free conservative cocompletion of the category of finite non-empty totally ordered sets $\Delta$, which is also known as the simplex category.
\end{abstract}

\section{Introduction}

Colimits are an important tool in category theory, allowing us to glue together objects in a category in a universal way.
However, most categories do not have all colimits.
We can get around this, for a small category $\cat C$, by considering its free cocompletion $\widehat{\cat C} = [\cat C^{\op}, \Set]$, which is cocomplete and contains $\cat C$ as a full subcategory via the Yoneda embedding:
\begin{align*}
\yo : \cat C &\to \widehat{\cat C} \\
c &\mapsto \cat C(-, c)
\end{align*}
Moreover, it satisfies the following universal property~\cite{lambek1966completions}.
For every cocomplete category $\cat D$ and functor $F : \cat C \to \cat D$, there is an essentially unique cocontinuous functor extending $F$ along the Yoneda embedding:
\[
\begin{tikzcd}
\cat C \drar["F"'] \rar["\yo"] & \widehat{\cat C} \dar[dashed, "\widehat{F}"] \\
& \cat D
\end{tikzcd}
\]

However, the category $\cat C$ will often have some colimits to start with, but the Yoneda embedding will not, in general, preserve those colimits.
This motivates the idea of the free conservative cocompletion.
We recall the definition from~\cite{velebil2002remark}.

\begin{definition}
\label{def:cocompletion}
The \emph{free conservative cocompletion} of a category $\cat C$ consists of:
\begin{itemize}
\item a cocomplete category $\widetilde{\cat C}$, and
\item a fully faithful cocontinuous functor $I : \cat C \to \widetilde{\cat C}$.
\end{itemize}
such that for every cocomplete category $\cat D$ and cocontinuous functor $F : \cat C \to \cat D$, there exists an essentially unique cocontinuous functor $\widetilde{F} : \widetilde{\cat C} \to \cat D$ such that:
\[
\begin{tikzcd}
\cat C \drar["F"'] \rar["I"] & \widetilde{\cat C} \dar[dashed, "\widetilde{F}"] \\
& \cat D
\end{tikzcd}
\]
\end{definition}

There is a well-known way to characterise the free conservative cocompletion of a small category, which is guaranteed to exist, due to Kelly~\cite[Theorem 6.23]{kelly1982basic}.
See~\cite[Theorem 11.5]{fiore1996enrichment} for a simpler description of the result without proof.

\begin{proposition}
\label{prop:cocompletion}
If $\cat C$ is a small category, the free conservative cocompletion $\widetilde{\cat C}$ is equivalent to the full subcategory of $[\cat C^{\op}, \Set]$ whose objects are the continuous presheaves (i.e.\ presheaves that take colimits in $\cat C$ to limits in $\Set$).
\end{proposition}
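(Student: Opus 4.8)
The plan is to exhibit the full subcategory of continuous presheaves as a reflective localization of $\widehat{\cat C} = [\cat C^{\op}, \Set]$ and to verify that it satisfies \Cref{def:cocompletion} directly; since that universal property pins down the free conservative cocompletion up to equivalence, this identifies it with $\widetilde{\cat C}$. Write $\cat E \hookrightarrow \widehat{\cat C}$ for the full subcategory of continuous presheaves, and let $I$ be the Yoneda embedding corestricted to $\cat E$. For each colimit cocone $c = \colim_{j} D_j$ in $\cat C$, applying $\yo$ and forming the colimit pointwise in $\widehat{\cat C}$ yields a comparison morphism
\[
\gamma_D \colon \colim_{j} \yo(D_j) \longrightarrow \yo(c),
\]
and I gather these into a class $\Sigma$. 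A one-line Yoneda computation identifies $\cat E$ with the orthogonal class $\Sigma^{\perp}$: applying $\widehat{\cat C}(-, P)$ to $\gamma_D$ and using $\widehat{\cat C}(\yo(x), P) \cong P(x)$ together with the fact that maps out of a colimit form the limit of the maps turns orthogonality of $P$ to $\gamma_D$ into exactly the condition $P(c) \cong \lim_j P(D_j)$.

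The heart of the argument, and the step I expect to be the main obstacle, is that $\cat E = \Sigma^{\perp}$ is \emph{reflective} in $\widehat{\cat C}$. This is not formal: $\Sigma$ is genuinely a proper class — a small cocomplete poset already contributes comparison maps $\coprod_J \yo(d) \to \yo(d)$ for unboundedly large $J$ — so one cannot simply feed $\Sigma$ into the small-object argument. Here smallness of $\cat C$ is what makes it work; morally it reduces $\Sigma$ to a set of maps with the same orthogonal class, after which the orthogonal-reflection construction applies, and this is precisely the content of Kelly's cited theorem. The outcome is a reflector $L \dashv i$ with $i$ fully faithful and counit $Li \cong \id$.

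Everything else is then formal. A reflective subcategory of a cocomplete category is cocomplete, with $\colim^{\cat E} = L \circ \colim^{\widehat{\cat C}} \circ\, i$, so $\cat E$ is cocomplete. Hom-functors send colimits in $\cat C$ to limits, so representables are continuous; hence $I$ lands in $\cat E$, is fully faithful, and satisfies $L\yo \cong I$. For cocontinuity of $I$, the colimit in $\cat E$ of the cocone $I(D_j)$ coming from $c = \colim_j D_j$ is $L(\colim_j \yo(D_j))$, and since $L$ inverts $\gamma_D$ while $\yo(c)$ is already local, this equals $L\yo(c) \cong I(c)$ with the comparison map as the isomorphism; thus $I$ preserves the colimits of $\cat C$.

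For the universal property, let $F \colon \cat C \to \cat D$ be cocontinuous with $\cat D$ cocomplete, and let $\widehat{F} \colon \widehat{\cat C} \to \cat D$ be its cocontinuous extension along Yoneda. Cocontinuity of $F$ sends each $\gamma_D$ to the comparison map $\colim_j F(D_j) \to F(c)$, which is invertible, so $\widehat{F}$ inverts $\Sigma$; being cocontinuous it then inverts every reflection unit, as these lie in the saturation of $\Sigma$ under the colimit constructions $\widehat{F}$ preserves. Setting $\widetilde{F} := \widehat{F} i$ therefore gives a cocontinuous functor with $\widetilde{F} I = \widehat{F}\yo \cong F$. Finally, any cocontinuous $G$ with $GI \cong F$ yields a cocontinuous $GL$ with $GL\,\yo \cong F$, so $GL \cong \widehat{F}$ by essential uniqueness of the Yoneda extension; precomposing with $i$ and using $Li \cong \id$ gives $G \cong \widehat{F} i = \widetilde{F}$, which establishes essential uniqueness.
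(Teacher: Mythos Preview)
The paper does not actually prove \Cref{prop:cocompletion}; it records it as a known characterisation and cites Kelly's \emph{Basic Concepts of Enriched Category Theory}, Theorem~6.23 (with Fiore for a statement without proof). So there is no in-paper argument to compare your proposal against.

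That said, your sketch is a faithful outline of the standard proof behind the cited result. The identification of the continuous presheaves with the orthogonal class $\Sigma^{\perp}$ for the comparison maps $\gamma_D$ is correct, and you are right that the only non-formal step is reflectivity of $\Sigma^{\perp}$: $\Sigma$ is genuinely a proper class, and it is precisely Kelly's machinery that shows smallness of $\cat C$ lets one replace $\Sigma$ by a set with the same orthogonal class and then run an orthogonal-reflection construction. Once that is granted, your derivations of cocompleteness of $\cat E$, cocontinuity of $I$, and the universal property are the expected ones. One small point worth tightening: your claim that a cocontinuous $\widehat{F}$ inverting $\Sigma$ automatically inverts all reflection units relies on knowing that each unit lies in the closure of $\Sigma$ under pushouts and transfinite composites, which in turn depends on \emph{how} the reflector is built (small object argument); this is true in Kelly's construction, but it is part of the package you are importing, not something that follows from the bare existence of a reflector.

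In short: your proposal is correct and matches the approach of the cited reference; the paper itself simply quotes the result.
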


While this description is useful, it is not always easy to work with.
In general, obtaining a concrete description of the free conservative cocompletion of a given category is not straightforward.
In this paper, we will prove the following result.

\begin{theorem}
The category of partially ordered sets $\Pos$ is the free conservative cocompletion of the category of finite non-empty totally ordered sets $\Delta$.
\end{theorem}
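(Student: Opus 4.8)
The plan is to apply \cref{prop:cocompletion}: I will exhibit an equivalence between $\Pos$ and the category of continuous presheaves on $\Delta$, and check that it identifies the inclusion $i \colon \Delta \hookrightarrow \Pos$ with the canonical embedding $I$. The comparison functor is the nerve $N \colon \Pos \to [\Delta^{\op}, \Set]$, $N(P) = \Pos(i(-), P)$, whose value $N(P)_n$ is the set of chains $x_0 \le \dots \le x_n$ in $P$. Since every hom-set between finite total orders computed in $\Pos$ agrees with the one in $\Delta$, we have $N \circ i \cong \yo$, so $N$ carries representables to representables. The first step is to show that $N$ is fully faithful; equivalently, that $i$ is dense, i.e.\ that every poset is the colimit in $\Pos$ of the canonical diagram of its finite chains. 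This is the co-Yoneda presentation $N(P) = \colim \yo$ transported across $N$, and full faithfulness then reduces to the observation that an order-preserving map $P \to Q$ is exactly a natural family of maps on chains.

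Next I would show that $i$ is cocontinuous, which yields the easy half of the image computation. The key lemma is that a diagram in $\Delta$ admits a colimit in $\Delta$ precisely when its colimit in $\Pos$ is again totally ordered, and in that case the two colimits agree: since $i$ is fully faithful a totally ordered $\Pos$-colimit is automatically the $\Delta$-colimit, while conversely a $\Delta$-colimit is detected by mapping into total orders, which suffices to pin down a poset. Granting this, each $N(P)$ is continuous for formal reasons: if $C = \colim_j D_j$ is a colimit in $\Delta$, then $C = \colim_j i(D_j)$ in $\Pos$, so $N(P)(C) = \Pos(C, P) = \lim_j \Pos(i D_j, P) = \lim_j N(P)(D_j)$, because $\Pos(-, P)$ sends colimits to limits. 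Thus $N$ factors through the continuous presheaves.

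The crux is the converse: every continuous presheaf $X$ is isomorphic to some $N(P)$. Here I would read off the defining conditions from specific colimits in $\Delta$. The spine decomposition $[n] = [1] \cup_{[0]} \dots \cup_{[0]} [1]$ is a colimit in $\Delta$, so continuity forces the Segal condition $X_n \cong X_1 \times_{X_0} \dots \times_{X_0} X_1$; by the nerve theorem $X$ is then the nerve of a category $\cat E$ with object set $X_0$. The coequalizer $[0] = \operatorname{coeq}(\delta^0, \delta^1 \colon [0] \rightrightarrows [1])$ forces $X_0 \cong \set{e \in X_1 : d_0 e = d_1 e}$, i.e.\ $\cat E$ has no endomorphisms besides identities. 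The remaining colimits in $\Delta$ must then be used to force $\cat E$ to be thin and antisymmetric, so that $\cat E$ is a poset $P$ with $X \cong N(P)$. Pinning down exactly these colimits, and verifying that continuity with respect to them yields thinness and antisymmetry, is the main obstacle of the argument — it is here that the combinatorics of $\Delta$ genuinely enter, since the naive condition forcing thinness would require the coproduct $[0] + [0]$, which does not exist in $\Delta$.

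Once the essential image of $N$ is identified with the continuous presheaves, \cref{prop:cocompletion} gives $\Pos \simeq \widetilde{\Delta}$, and the computation $N \circ i \cong \yo$ shows that this equivalence carries $i$ to the embedding $I$ of \cref{prop:cocompletion}. Hence $(\Pos, i)$ satisfies the universal property of \cref{def:cocompletion}, i.e.\ $\Pos$ is the free conservative cocompletion of $\Delta$.
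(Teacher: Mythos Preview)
Your overall strategy coincides with the paper's: use the nerve of $i$, show it is fully faithful (density), show its image lies in the continuous presheaves (cocontinuity of $i$), and then identify the essential image. The two places where your sketch is genuinely incomplete are exactly the two places where the paper does real work.

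First, your cocontinuity argument is too quick. Saying that ``a $\Delta$-colimit is detected by mapping into total orders, which suffices to pin down a poset'' handles only the step $\Tos \hookrightarrow \Pos$: linear extensions show that the comparison map from the $\Pos$-colimit $P$ to the $\Delta$-colimit $N$ is order-reflecting, hence $P$ is totally ordered. But nothing in your sketch forces $P$ to be \emph{finite}, and your biconditional ``a diagram in $\Delta$ has a $\Delta$-colimit iff its $\Pos$-colimit is totally ordered'' is false for that reason. The paper isolates the missing step as cocontinuity of $\Delta \hookrightarrow \Tos$, proved via a cardinality bound: any injective monotone map from a finite non-empty total order into a total order splits, and retracting onto finite subsets of the image of a competing cocone shows that image has at most $\lvert N\rvert$ elements. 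Without an argument of this shape you cannot conclude that $\Delta$-colimits are computed in $\Pos$.

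Second, you correctly flag thinness and antisymmetry as the crux and then stop. The paper supplies the missing colimits, and they are worth recording because they are not the ones you might first guess. Thinness comes from the pushout in $\Delta$ of $\ord 1$ with itself along \emph{both} endpoints simultaneously (glue two copies of $\ord 1$ along $\delta_0$ and along $\delta_1$); the colimit is $\ord 1$ itself, and continuity of $X$ then says precisely that $\ang{d_1,d_0}\colon X_1 \to X_0\times X_0$ is injective. Antisymmetry comes from gluing two copies of $\ord 1$ along their endpoints \emph{with a twist} (matching $\delta_0$ to $\delta_1$ and vice versa); the colimit is $\ord 0$, and continuity identifies $X_0$ with $\{(x,y): x\le_X y,\ y\le_X x\}$ via the diagonal. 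Once these are in hand your Segal/nerve-theorem framing and the paper's direct description of $X_n$ as chains are equivalent, and the rest of your outline goes through.
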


\noindent
The proof makes use of the nerve of the inclusion $\Delta \hookrightarrow \Pos$, which is a functor
\[
N : \Pos \to [\Delta^{\op}, \Set]
\]
We first show that the inclusion is cocontinuous, so the image of the nerve is contained in the category of continuous simplicial sets.
We then show that the inclusion is dense, so the nerve is fully faithful.
Finally, we show that the nerve is essentially surjective onto the category of continuous simplicial sets.
Therefore $\Pos$ is equivalent to the free conservative cocompletion of $\Delta$ by \Cref{prop:cocompletion}.

\subsection{Related work}

A similar result was proved by Mimram and Di Giusto~\cite{mimram2013categorical}.
They give a concrete description of the free finite conservative cocompletion of a category $\mathcal{L}$ that has the same objects as $\Delta$ but different morphisms (partial strictly monotone maps instead of monotone maps).
There are some similarities in the proofs, mostly in the proof of transitivity in \Cref{lma:transitivity}, but our result is more general.

The motivation for the main result comes from associative $n$-categories~\cite{dorn2018associative,reutter2019high}.
The terms in an associative $n$-category are defined inductively over $\Delta$, yet several results use colimits and require passing to $\Pos$, such as~\cite{sarti2023posetal,tataru2023layout,tataru2024theory}.
We previously lacked a formal justification for this passage, and this paper finally provides one.

\subsection{Acknowledgements}

The author would like to thank his supervisor Jamie Vicary for reviewing this paper, as well as Alex Rice and Ioannis Markakis for helpful discussions.

\section{Preliminaries}

We first recall some basic definitions and facts from order theory.

\begin{definition}
A \emph{partial order} on a set $X$ is a relation $\leq$ that is:
\begin{itemize}
\item \emph{reflexive}, i.e.\ $x \leq x$ for all $x \in X$,
\item \emph{transitive}, i.e.\ if $x \leq y$ and $y \leq z$ then $x \leq z$ for all $x, y, z \in X$, and
\item \emph{antisymmetric}, i.e.\ if $x \leq y$ and $y \leq x$ then $x = y$ for all $x, y \in X$.
\end{itemize}
A \emph{total order} is a partial order such that either $x \leq y$ or $y \leq x$ for all $x, y \in X$.
\end{definition}

We write $\Pos$ for the category of partially ordered sets and monotone maps, $\Tos$ for the full subcategory of totally ordered sets, and $\Delta$ for the full subcategory of finite non-empty totally ordered sets, also known as the \emph{simplex category}:
\[
\Delta \hookrightarrow \Tos \hookrightarrow \Pos
\]

For convenience, we will often work with a skeletal presentation of the simplex category $\Delta$, where the objects are given by the finite non-empty ordinals
\[
\ord n = \set{0, 1, \dots, n}
\]
and the morphisms are generated by two families of monotone maps:
\begin{itemize}
\item \emph{face maps} $\delta_i : \ord{n - 1} \to \ord n$ skipping an element $i \in \ord n$, i.e.\
\[
\delta_i(j) = \begin{cases}
j & \text{if } j < i \\
j + 1 & \text{if } j \geq i
\end{cases}
\]
\item \emph{degeneracy maps} $\sigma_i : \ord{n + 1} \to \ord n$ duplicating an element $i \in \ord n$, i.e.\
\[
\sigma_i(j) = \begin{cases}
j & \text{if } j \leq i \\
j - 1 & \text{if } j > i
\end{cases}
\]
\end{itemize}
subject to the following equations, which are known as the \emph{simplicial identities}:
\begin{align*}
\delta_j \delta_i &= \delta_i \delta_{j - 1} & (i < j) \\
\sigma_j \sigma_i &= \sigma_i \sigma_{j + 1} & (i \leq j) \\
\sigma_j \delta_i &= \delta_j \sigma_{i - 1} & (i < j) \\
\sigma_j \delta_i &= \id & (i = j \text{ or } i = j + 1) \\
\sigma_j \delta_i &= \delta_{j - 1} \sigma_i & (i > j + 1)
\end{align*}

The category $\Pos$ is cocomplete, with its colimits obtained as follows: take the colimit in $\Set$, endow it with the smallest preorder $\leq$ making all maps monotone, and then take the quotient under the smallest equivalence relation $\sim$ such that:\footnotemark{}
\[
x \leq y \text{ and } y \leq x \implies x \sim y
\]
On the other hand, the category $\Delta$ is \emph{not} cocomplete (e.g.\ it has no coproducts).
However, we will see later that the inclusion $\Delta \hookrightarrow \Pos$ is cocontinuous, and in fact, the colimits in $\Delta$ are computed in the same way as the colimits in $\Pos$.

\footnotetext{The equivalence classes of $\sim$ are the strongly connected components of the preorder $\leq$.}

\begin{definition}
A \emph{linear extension} of a partial order $\leq$ is a total order $\preceq$ on the same set such that $\leq$ is contained in $\preceq$, i.e.\ $x \leq y$ implies $x \preceq y$ for all $x, y$.
\end{definition}

\begin{proposition}[Order extension principle~\cite{szpilrajn1930extension}]
Every partial order has a linear extension, and moreover, it is the intersection of all of its linear extensions.
\end{proposition}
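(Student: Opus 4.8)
The plan is to prove the two assertions separately: first that a linear extension exists, then that the partial order is recovered as the intersection of all such extensions. The engine for both parts is a single \emph{separation lemma}: if $\leq$ is a partial order on a set $X$ and $a, b \in X$ are incomparable, then the relation defined by
\[
x \leq' y \iff x \leq y \ \text{ or } \ (x \leq a \text{ and } b \leq y)
\]
is again a partial order on $X$, it contains $\leq$, and it satisfies $a \leq' b$. Reflexivity and containment are immediate, and transitivity is a routine four-case check on the disjuncts. The one delicate point, which I expect to be the main obstacle, is antisymmetry: if $x \leq' y$ and $y \leq' x$ with $x \neq y$, then every case other than $x \leq y \leq x$ forces a chain $b \leq \cdots \leq a$, hence $b \leq a$, contradicting the assumed incomparability of $a$ and $b$.

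For existence, I would apply Zorn's lemma to the set of all partial orders on $X$ containing $\leq$, ordered by inclusion. This set is nonempty, and the union of a chain of such partial orders is again one of them (directedness of the chain supplies transitivity and antisymmetry), so every chain has an upper bound. Let $\preceq$ be a maximal element. If $\preceq$ were not total it would have two incomparable elements, and the separation lemma applied to $\preceq$ would produce a strictly larger partial order still containing $\leq$, contradicting maximality. Hence $\preceq$ is a linear extension of $\leq$.

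For the intersection, the inclusion $\leq \subseteq \bigcap_i \preceq_i$ is immediate, since by definition every linear extension contains $\leq$. For the reverse inclusion I would show that whenever $x \not\leq y$ there is a linear extension omitting the pair $(x, y)$. If $y \leq x$ (and then necessarily $x \neq y$), every linear extension already has $y \preceq x$, so by antisymmetry none contains $(x, y)$. Otherwise $x$ and $y$ are incomparable, and applying the separation lemma with $a = y$ and $b = x$ yields a partial order with $y \leq' x$; extending it to a linear order $\preceq$ by the existence part gives $y \preceq x$ with $x \neq y$, whence $x \not\preceq y$ by antisymmetry. In either case $(x, y)$ is absent from some linear extension, so $\bigcap_i \preceq_i \subseteq \leq$, completing the proof.
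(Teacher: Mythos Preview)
Your argument is correct and is essentially the classical proof of Szpilrajn's theorem: the separation lemma is stated and verified correctly (the antisymmetry case analysis is exactly where the incomparability hypothesis bites), the Zorn's-lemma step is sound, and the intersection argument via ``separate any non-relation $(x,y)$ by a suitable linear extension'' is the standard way to get the second clause.

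There is nothing to compare against, however: the paper does not prove this proposition. It is quoted as a classical result with a citation to Szpilrajn, followed only by the remark that the infinite case requires Zorn's lemma while the finite case does not. Your proof is consistent with that remark (you invoke Zorn explicitly), and in the finite case your separation lemma alone already yields both statements by a straightforward induction on the number of incomparable pairs, with no appeal to choice.
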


\noindent
Note that for infinite sets, this requires Zorn's Lemma (which is equivalent to the axiom of choice).
However, for finite sets, it can be proved without choice.

\subsection{Simplicial sets}

Recall that the free cocompletion of the simplex category $\Delta$ is the category of simplicial sets $[\Delta^{\op}, \Set]$, which will play a crucial role in our proof.

\begin{definition}
A \emph{simplicial set} is a presheaf on $\Delta$, i.e.\ a functor $X : \Delta^{\op} \to \Set$.
\end{definition}

Given a simplicial set $X$, we adopt the following notation:
\begin{itemize}
\item $X_n$ is the image of $\ord n$, whose elements are called \emph{$n$-simplices},
\item $d_i : X_n \to X_{n - 1}$ is image of the face map $\delta_i : \ord{n - 1} \to \ord n$, and
\item $s_i : X_n \to X_{n + 1}$ is image of the degeneracy map $\sigma_i : \ord{n + 1} \to \ord n$.
\end{itemize}
In fact, the data of a simplicial set $X$ is completely determined by the sets $X_n$ and maps $d_i, s_i$ satisfying the dual of the simplicial identities (see \Cref{fig:simplicial-set}).

\begin{figure}
\centering
\begin{tikzcd}
X_0 \rar["s_0" description] &
X_1 \lar[shift left=1em, "d_0" description] \lar[shift right=1em, "d_1" description] \rar[shift left=1em, "s_1" description] \rar[shift right=1em, "s_0" description] &
X_2 \lar[shift left=2em, "d_0" description] \lar["d_1" description] \lar[shift right=2em, "d_2" description] \quad\cdots
\end{tikzcd}
\caption{The data of a simplicial set $X$.}
\label{fig:simplicial-set}
\end{figure}

\subsection{Nerve, dense functors}

\begin{definition}
Any functor $F : \cat C \to \cat D$ induces a functor $N_F : \cat D \to [\cat C^{\op}, \Set]$, called the \emph{nerve} of $F$, given by the restricted Yoneda embedding:
\[
\cat D \xrightarrow{\yo} [\cat D^{\op}, \Set] \xrightarrow{(-) \circ F^{\op}} [\cat C^{\op}, \Set]
\]
In particular, it sends every object $d \in \cat D$ to the presheaf $N_F(d) \coloneqq \cat D(F(-), d)$.
\end{definition}

Recall the following proposition due to Ulmer~\cite[Lemma 1.7]{ulmer1968properties}.

\begin{proposition}
\label{prop:dense}
For every functor $F : \cat C \to \cat D$, the following are equivalent:
\begin{itemize}
\item Every object $d \in \cat D$ is a colimit of objects in the image of $F$:
\[
d \cong \colim (F \downarrow d \xrightarrow{\pi_\cat C} \cat C \xrightarrow{F} \cat D)
\]
Here $F \downarrow d$ is the comma category whose objects are morphisms $F(c) \xrightarrow{f} d$ and whose morphisms are morphisms $\alpha : c_1 \to c_2$ making this commute:
\[
\begin{tikzcd}[column sep=small]
F(c_1) \drar["f_1"'] \ar[rr, "F \alpha"] && F(c_2) \dlar["f_2"] \\
& d &
\end{tikzcd}
\]
Also the functor $\pi_\cat C$ is the canonical projection sending $F(c) \xrightarrow{f} d$ to $c$.
\item The nerve $N_F : \cat D \to [\cat C^{\op}, \Set]$ is fully faithful.
\end{itemize}
\end{proposition}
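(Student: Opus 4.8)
The plan is to prove both implications simultaneously by showing that, for a fixed object $d \in \cat D$, the two conditions encode the same universal property, linked through a single bijection. Write $D \colon F \downarrow d \to \cat D$ for the canonical diagram $F \circ \pi_{\cat C}$, so that $D(c, f) = F(c)$, and let $\lambda$ denote the canonical cocone under $D$ with apex $d$ whose component at $(c, f)$ is the map $f \colon F(c) \to d$ itself; the cocone condition for $\lambda$ is immediate from the definition of the morphisms of $F \downarrow d$. The first bullet asserts precisely that $(d, \lambda)$ is a colimiting cocone, while the second bullet is a statement about the hom-sets of the nerve. I would bridge the two by identifying natural transformations out of $N_F(d)$ with cocones under $D$.

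Concretely, the first step is to construct, for every $e \in \cat D$, a bijection $\Phi_e$ between the natural transformations $N_F(d) \Rightarrow N_F(e)$ and the cocones under $D$ with apex $e$. Given such an $\eta$, whose components are functions $\eta_c \colon \cat D(F(c), d) \to \cat D(F(c), e)$, I send it to the family $\bigl(\eta_c(f)\bigr)_{(c,f)}$. The naturality square of $\eta$ at a morphism $\alpha \colon c_1 \to c_2$, evaluated at an element $f \colon F(c_2) \to d$, reads $\eta_{c_1}(f \circ F\alpha) = \eta_{c_2}(f) \circ F\alpha$, which is exactly the compatibility condition of a cocone under $D$; conversely a cocone reassembles into a natural transformation by the same formula, and the two assignments are mutually inverse. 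This step is pure unpacking of definitions, amounting to the identification of $F \downarrow d$ with the category of elements of $N_F(d)$.

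The second step is the compatibility that makes the two bullets correspond. For a morphism $h \colon d \to e$ the component $N_F(h)_c$ is postcomposition by $h$, so $\Phi_e(N_F(h))$ is the family $(h \circ f)_{(c,f)}$, i.e.\ the canonical cocone $\lambda$ postcomposed with $h$. Thus $\Phi_e$ fits into a commuting triangle whose other two edges are the nerve action $h \mapsto N_F(h)$ on $\cat D(d, e)$ and the canonical comparison $h \mapsto h \cdot \lambda$ sending a morphism to its induced cocone. Because $\Phi_e$ is a bijection, the nerve action is bijective for every $e$ if and only if $h \mapsto h \cdot \lambda$ is bijective for every $e$; and the latter is, by definition, the statement that $\lambda$ exhibits $d$ as $\colim D$. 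Finally, $N_F$ is fully faithful exactly when the nerve action is bijective for all pairs $d, d'$, which by the above is exactly the condition that every $d$ is the colimit of its canonical diagram, so quantifying over $d$ closes the equivalence.

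I expect the main obstacle to be not any single computation but the care demanded by the second step: one must verify that the abstract bijection $\Phi_e$ genuinely intertwines the concrete nerve map with the canonical-cocone comparison, so that full faithfulness and the colimit property are translated into one another rather than merely turning out to be bijective for independent reasons. Once this commuting triangle is pinned down, the remainder is routine bookkeeping, and the whole argument is essentially the density (co-Yoneda) formula specialised to the presheaf $N_F(d)$, whose category of elements is $F \downarrow d$.
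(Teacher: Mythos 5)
Your proof is correct. Note that the paper itself offers no proof of this statement --- it recalls it from Ulmer's paper without argument --- so there is no in-paper proof to compare against; what you have written is a complete, self-contained justification of the cited fact. Your Step 1 bijection $\Phi_e$ is exactly the observation you name at the end: $F \downarrow d$ is the category of elements of the presheaf $N_F(d)$, so by the density (co-Yoneda) formula a natural transformation $N_F(d) \Rightarrow N_F(e)$ is the same data as a cocone under $D = F \circ \pi_{\cat C}$ with apex $e$, and your unwinding of the naturality square into the cocone condition is accurate in both directions. Your Step 2 triangle is the essential point and you verify it correctly: $\Phi_e(N_F(h)) = (h \circ f)_{(c,f)} = h \cdot \lambda$, so the nerve's action on $\cat D(d, e)$ is bijective for all $e$ precisely when the comparison $h \mapsto h \cdot \lambda$ is, which is the colimit property for $d$; quantifying over $d$ gives full faithfulness. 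One fine point you should make explicit: the first bullet as printed asserts only an abstract isomorphism $d \cong \colim D$, whereas what your argument proves (and what density actually means, and what the equivalence genuinely requires) is the stronger statement that the \emph{canonical} cocone $\lambda$ is itself colimiting --- an object abstractly isomorphic to the colimit need not make $\lambda$ universal. Since the proposition is clearly intended in the canonical-cocone sense, as in Ulmer's formulation, your proof establishes the intended statement, and this reading is the one the paper uses later when computing $\widetilde F$ as a left Kan extension.
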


A functor satisfying the conditions of \Cref{prop:dense} is called \emph{dense}.

\begin{example}
The Yoneda embedding $\yo : \cat C \to [\cat C^{\op}, \Set]$ is dense.
To see that, note that any presheaf is a colimit of representables, and the nerve is the identity.
\end{example}

\section{Main results}

We begin by showing that the inclusion $i : \Delta \hookrightarrow \Pos$ is cocontinuous.
We break it down into two steps: (1) we show that the inclusion $\Delta \hookrightarrow \Tos$ is cocontinuous, and (2) we show that the inclusion $\Tos \hookrightarrow \Pos$ is cocontinuous.

\begin{proposition}
\label{prop:split-mono}
Let $f : N \to T$ be an injective monotone map between totally ordered sets with $N$ finite and non-empty.
Then $f$ is a split monomorphism.
\end{proposition}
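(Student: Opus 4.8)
The plan is to build an explicit retraction of $f$, that is, a monotone map $g : T \to N$ with $g \circ f = \id_N$; producing such a $g$ exhibits $f$ as a split monomorphism. Write $N = \set{x_0 < x_1 < \dots < x_m}$, which is possible since $N$ is finite, non-empty, and totally ordered. Because $f$ is injective and monotone between total orders it is strictly monotone, so its image is a chain $f(x_0) < f(x_1) < \dots < f(x_m)$ in $T$. The construction is to ``round down'' along this chain: send each $t \in T$ to the largest $x_i$ whose image still lies below $t$, defaulting to the bottom element $x_0$ when no image does. Formally,
\[
g(t) = \begin{cases}
\max \set{x_i : f(x_i) \le t} & \text{if this set is non-empty,} \\
x_0 & \text{otherwise.}
\end{cases}
\]

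Next I would check that $g$ is well defined and then verify the two required properties. Well-definedness is exactly where the hypotheses on $N$ are used: finiteness ensures that $\set{x_i : f(x_i) \le t}$, a finite subchain of $N$, has a maximum whenever it is non-empty, while non-emptiness guarantees that the fallback value $x_0$ exists. That $g$ retracts $f$ is then immediate: for each $j$, strict monotonicity of the image gives $\set{x_i : f(x_i) \le f(x_j)} = \set{x_0, \dots, x_j}$, whose maximum is $x_j$, so $g(f(x_j)) = x_j$. For monotonicity, if $t \le t'$ then $f(x_i) \le t$ implies $f(x_i) \le t'$ by transitivity, so the defining set for $t$ is contained in that for $t'$ and hence $g(t) \le g(t')$; the boundary case where the set for $t$ is empty is covered because $x_0$ is the minimum of $N$.

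The main obstacle is really just the well-definedness of the maximum, which is precisely where finiteness of $N$ is indispensable. Without it the statement fails: taking $N = \mathbb{N}$ and $T = \mathbb{N} \cup \set{\infty}$ with $f$ the inclusion, any retraction would have to send $\infty$ to some $x_n$, contradicting monotonicity against $x_{n+1} < \infty$. I therefore expect the only genuine care to lie in reconciling the fallback clause with the general case in the monotonicity argument; everything else is routine. Note that totality of $T$ itself is not needed for this construction, so the argument in fact gives the retraction for an arbitrary poset $T$ receiving $N$.
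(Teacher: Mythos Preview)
Your proof is correct and is essentially the paper's own argument: the paper defines the same retraction $g$ by an explicit case split ($x_0$ if $t < f(x_0)$; $x_i$ if $f(x_i) \le t < f(x_{i+1})$; $x_n$ if $t \ge f(x_n)$), which unwinds to exactly your ``round down'' formula. Your phrasing via $\max\set{x_i : f(x_i) \le t}$ is arguably cleaner, and your closing observation that totality of $T$ is not needed is correct and a small bonus over the paper's case split, which does implicitly use comparability in $T$ to be exhaustive.
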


\begin{proof}
Write $N = \set{x_0, \dots, x_n}$.
We define $g : T \to N$ as follows:
\[
g(t) = \begin{cases}
x_0 & \text{if } t < f(x_0) \\
x_i & \text{if } f(x_i) \leq t < f(x_{i + 1}) \\
x_n & \text{if } t \geq f(x_n)
\end{cases}
\]
It is easy to see that $g \circ f = \id$, so $f$ is a split monomorphism.
\end{proof}

The following is adapted from a proof by David Gao on Mathoverflow.\footnote{See \url{https://mathoverflow.net/q/467739/525267}.}

\begin{proposition}
The inclusion $\Delta \hookrightarrow \Tos$ is cocontinuous.
\end{proposition}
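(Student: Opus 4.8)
The plan is to use the standard reading of cocontinuity for a functor whose domain fails to be cocomplete: I must show that whenever a diagram $D : J \to \Delta$ has a colimit $(C, \mu)$ in $\Delta$, the cocone $(C, \mu)$ is again a colimit of $i \circ D$ in $\Tos$. The only handle on $C$ is its universal property tested against \emph{finite} non-empty total orders, so the entire difficulty is to upgrade this to a universal property tested against an arbitrary total order $T$.

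First I would record a joint-surjectivity fact: the images $\mu_j(D(j))$ cover $C$. Indeed, the subset $C_0 = \bigcup_j \mu_j(D(j))$, with the induced order, is a finite non-empty sub-total-order of $C$, so it lies in $\Delta$ and receives the corestricted cocone; the universal property of $C$ then produces a retraction $C \to C_0$ splitting the inclusion, forcing $C_0 = C$. This gives the uniqueness half for free: any two maps out of $C$ that agree after precomposition with every $\mu_j$ agree on all of $C$.

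The core of the argument is existence. Given a cocone $\nu : iD \Rightarrow T$ into an arbitrary total order $T$, I would exploit \Cref{prop:split-mono}: for every finite non-empty sub-total-order $V \subseteq T$ the inclusion $V \hookrightarrow T$ is a split monomorphism, with a retraction $\rho_V : T \to V$. Then $\rho_V \circ \nu$ is a cocone into the object $V \in \Delta$, so the universal property of $C$ yields a unique $h_V : C \to V \hookrightarrow T$ with $h_V \mu_j = \rho_V \nu_j$. Since $C$ is finite, finitely many indices $j_1, \dots, j_n$ already cover it; let $W \subseteq T$ be the finite sub-total-order generated by their $\nu$-images. For every $V \supseteq W$ the retraction fixes $W$, so $h_V \mu_{j_l} = \nu_{j_l}$ on the covering indices, and by joint surjectivity this pins down $h_V$ independently of such $V$; call the common map $h$. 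Finally, for an arbitrary index $j$ I would choose $V \supseteq W \cup \nu_j(D(j))$, for which $\rho_V$ fixes $\nu_j(D(j))$, giving $h \mu_j = h_V \mu_j = \rho_V \nu_j = \nu_j$. Thus $h$ is the required factorization, and $C$ is a colimit of $iD$ in $\Tos$.

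The main obstacle is precisely that $J$ may be infinite, so a competing cocone $\nu$ need not factor through any single finite sub-total-order of $T$, and $C$ is known only through its universal property rather than as a set-level quotient. \Cref{prop:split-mono} is what breaks the deadlock: it lets me approximate $T$ from within by finite sub-total-orders, on which the universal property of $C$ can actually be applied, while joint surjectivity guarantees that the resulting maps cohere into one globally defined $h$.
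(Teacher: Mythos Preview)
Your proof is correct and, like the paper's, hinges on \Cref{prop:split-mono}, but the two arguments are organised differently. The paper bypasses your stabilisation step entirely: given a cocone $\nu : iD \Rightarrow T$, it shows directly that the full image $\nu[D] = \bigcup_j \nu_j(D(j)) \subseteq T$ is finite. For every finite non-empty $S \subseteq \nu[D]$, the retraction $r : \nu[D] \to S$ from \Cref{prop:split-mono} turns $r \circ \nu$ into a cocone in $\Delta$, so the universal property yields a map $C \to S$; joint epicity of $\mu$ and surjectivity of $r$ force this map to be surjective, whence $\abs{S} \leq \abs{C}$. Since this bounds every finite subset of $\nu[D]$, the image itself has at most $\abs{C}$ elements and hence lies in $\Delta$, after which the factorisation is immediate from the universal property there. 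Your approach instead never establishes finiteness of $\nu[D]$; you build the factorising map $h$ as the common value of the approximations $h_V$ for $V \supseteq W$, using the finite covering family $\mu_{j_1}, \dots, \mu_{j_n}$ to certify that these approximations agree. The paper's route is shorter and gives the clean byproduct $\abs{\nu[D]} \leq \abs{C}$; yours is more explicitly constructive and closer in spirit to a directed-colimit argument, at the cost of extra bookkeeping over which indices are covered at each stage.

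One small wording issue in your joint-surjectivity paragraph: what the universal property actually hands you is a map $r : C \to C_0$ with $\iota \circ r = \id_C$ (via uniqueness applied to $\iota r \mu_j = \mu_j$), so $\iota$ is a split \emph{epimorphism}, which is exactly what forces $C_0 = C$. Describing $r$ as ``a retraction splitting the inclusion'' suggests only $r \circ \iota = \id_{C_0}$, which would not suffice.
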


\begin{proof}
Let $D : \cat J \to \Delta$ be a diagram such that it has a colimit $\phi : D \Rightarrow N$ for a finite non-empty totally ordered set $N$.
We claim that $\phi$ is also the colimit of $D$ in $\Tos$, so let $\psi : D \Rightarrow T$ be a cocone in $\Tos$ for a totally ordered set $T$.

Now consider the image of $\psi$ which is defined to be the following subset of $T$:
\[
\psi[D] \coloneqq \bigcup_{j \in \cat J} \psi_j[D_j] \subseteq T
\]
Note that $\cat J$ is non-empty since $\Delta$ has no initial object, so $\psi[D]$ is non-empty.
We claim that it is also finite.
Now for every finite non-empty subset $S \subseteq \psi[D]$, there is a map $r : \psi[D] \to S$ by \Cref{prop:split-mono}.
Hence $r \circ \psi$ is a cocone in $\Delta$, so there exists a unique monotone map $u : N \to S$ making this commute:
\[
\begin{tikzcd}
& S & \\
N \urar["u"] && \psi[D] \ular["r"'] \\
& D_j \ular["\phi_j"] \urar["\psi_j"'] &
\end{tikzcd}
\]
Note that $\phi$ and $\psi$ are jointly epic and $r$ is epic, so $u$ is also epic.
Thus $\abs{S} \leq \abs{N}$.
Now this holds for every finite non-empty subset $S \subseteq \psi[D]$, so $\abs{\psi[D]} \leq \abs{N}$.

Finally, we have the following isomorphism of sets of monotone maps:
\[
\set{v: N \to T \mid v \circ \phi = \psi} \cong \set{v : N \to \psi[D] \mid v \circ \phi = \psi}
\]
This is because $\phi$ is jointly epic so the image of $v$ equals $\psi[D]$.
Since $\psi[D]$ lives in $\Delta$, the universal property in $\Tos$ follows from the universal property in $\Delta$.
\end{proof}

\begin{proposition}
The inclusion $\Tos \hookrightarrow \Pos$ is cocontinuous.
\end{proposition}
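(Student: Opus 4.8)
The plan is to take a diagram $D : \cat J \to \Tos$ that admits a colimit $\phi : D \Rightarrow T$ in $\Tos$, with $T$ totally ordered, and to verify that $\phi$ is already a colimit cocone in $\Pos$. The whole argument runs on the order extension principle: it lets us reduce the universal property against an arbitrary poset target to a family of universal properties against totally ordered targets, where the colimit $T$ can be used directly.

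First I would check that $\phi$ is jointly surjective on underlying sets, by the same move as in the previous proof. The image $\phi[D] = \bigcup_{j \in \cat J} \phi_j[D_j]$ is a totally ordered subset of $T$, hence an object of $\Tos$, and $\phi$ corestricts to a cocone $D \Rightarrow \phi[D]$. The universal property of $T$ yields a monotone map $T \to \phi[D]$ whose composite with the inclusion $\phi[D] \hookrightarrow T$ must be $\id_T$ by uniqueness, forcing $\phi[D] = T$. In particular $\phi$ is jointly epic in $\Pos$, which will supply uniqueness of factorisations for free.

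Now let $\psi : D \Rightarrow P$ be any cocone in $\Pos$. For each linear extension $\preceq$ of the order on $P$, let $P_\preceq$ denote the same underlying set equipped with $\preceq$, which is an object of $\Tos$; the identity map $P \to P_\preceq$ is monotone since $\preceq$ extends the order of $P$, so postcomposing $\psi$ with it gives a cocone in $\Tos$. The universal property of $T$ then supplies a unique monotone map $v_\preceq : T \to P_\preceq$ whose underlying function satisfies $v_\preceq \circ \phi = \psi$. Since all the $v_\preceq$ agree with $\psi$ on the image of $\phi$, and $\phi$ is jointly surjective, they coincide as a single function $v : T \to P$. For every $\preceq$ this $v$ is monotone as a map into $P_\preceq$, and because the order on $P$ is the intersection of all of its linear extensions, $v$ is monotone as a map into $P$. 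Finally $v \circ \phi = \psi$, and $v$ is the unique such monotone map because $\phi$ is jointly epic; this is exactly the universal property of $\phi$ as a colimit in $\Pos$, so the inclusion is cocontinuous.

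I expect the main obstacle to be the step that glues the family $(v_\preceq)_\preceq$ into one monotone map $v : T \to P$, which is where the two key ingredients meet: joint surjectivity of $\phi$ forces the $v_\preceq$ to agree as set functions, while the order extension principle upgrades monotonicity with respect to every linear extension to monotonicity with respect to the order on $P$ itself. Everything else is formal. The only caveat is that for infinite $P$ the order extension principle requires Zorn's lemma, as already flagged in the preliminaries.
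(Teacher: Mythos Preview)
Your argument is correct. Both proofs hinge on the order extension principle, but they are organised differently. The paper first invokes cocompleteness of $\Pos$ to form the colimit $\psi : D \Rightarrow P$ there and the comparison map $u : P \to T$; it then linearly extends $P$ itself, so that each extension $i : P \hookrightarrow L$ gives a $\Tos$-cocone $i \circ \psi$ which factors through $T$ via some $v$, yielding $v \circ u = i$. From this one reads off that $u$ is order-reflecting, hence $P$ is totally ordered, and full faithfulness of the inclusion then identifies $P$ with $T$. You instead verify the universal property of $T$ in $\Pos$ head-on, linearly extending an \emph{arbitrary} cocone target $P$ and gluing the resulting maps $v_\preceq$ into a single monotone $v : T \to P$ via the intersection property. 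This costs you the preliminary step of proving that $\phi$ is jointly surjective (which in the paper's setup is automatic, since the $\Pos$-colimit cocone $\psi$ is jointly epic), but it buys you independence from the cocompleteness of $\Pos$ and avoids the detour through ``$P$ is totally ordered''.
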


\begin{proof}
Let $D : \cat J \to \Tos$ be a diagram such that it has a colimit $\phi : D \Rightarrow T$ for a totally ordered set $T$.
We claim that $\phi$ is also the colimit of $D$ in $\Pos$.

Now $\Pos$ is cocomplete, so $D$ has a colimit $\psi : D \Rightarrow P$ in $\Pos$ for a poset $P$, and so there exists a unique monotone map $u : P \to T$ making this commute:
\[
\begin{tikzcd}
P \ar[rr, "u"] && T \\
& D_j \ular["\psi_j"] \urar["\phi_j"'] &
\end{tikzcd}
\]
Let $i : P \hookrightarrow L$ be a linear extension of $P$.
Then $i \circ \psi$ is a cocone over $D$ in $\Tos$, and so there exists a unique monotone map $v : T \to L$ making this commute:
\[
\begin{tikzcd}
& L & \\
T \urar["v"] && P \ular["i"'] \\
& D_j \ular["\phi_j"] \urar["\psi_j"'] &
\end{tikzcd}
\]
We have that $v \circ u \circ \psi = v \circ \phi = i \circ \psi$.
Since $\psi$ is jointly epic, it follows that $v \circ u = i$.
In other words, every linear extension $L$ of $P$ factors through $T$.

This implies that $u$ must be order-reflecting: if $u(x) \leq u(y)$ in $T$, then $x \leq y$ in every linear extension $L$ of $P$, so $x \leq y$ in $P$.
Therefore $P$ is totally ordered.
Now the inclusion $\Tos \hookrightarrow \Pos$ reflects colimits as it is fully faithful, so $T \cong P$.
\end{proof}

\begin{proposition}
\label{prop:delta-cocontinuous}
The inclusion $i : \Delta \hookrightarrow \Pos$ is cocontinuous.
\end{proposition}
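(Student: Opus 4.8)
The plan is to observe that the inclusion $i$ factors as the composite
\[
\Delta \hookrightarrow \Tos \hookrightarrow \Pos,
\]
and that both of these inclusions have already been shown to be cocontinuous in the two preceding propositions. It therefore suffices to argue that the composite of two cocontinuous functors is again cocontinuous, and then specialise to this factorisation.

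Here I would be slightly careful, because $\Delta$ is not cocomplete, so \emph{cocontinuous} must be read as ``preserves whatever colimits happen to exist'' rather than ``preserves all colimits''. The composition argument still goes through in this refined sense, and this is the one point worth spelling out. Suppose a diagram $D : \cat J \to \Delta$ has a colimit $\phi : D \Rightarrow N$ in $\Delta$. Since $\Delta \hookrightarrow \Tos$ is cocontinuous, $\phi$ is also a colimit of $D$ regarded as a diagram in $\Tos$. But now $D$, viewed in $\Tos$, is a diagram that genuinely has a colimit there, so the cocontinuity of $\Tos \hookrightarrow \Pos$ applies and tells us that $\phi$ is a colimit of $D$ in $\Pos$ as well. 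Hence $i$ preserves the colimit $\phi$, and since $\phi$ was an arbitrary existing colimit, $i$ is cocontinuous.

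There is essentially no obstacle at this stage: all the real work was carried out in establishing the cocontinuity of the two constituent inclusions, and the present statement is a formal consequence of them. The only subtlety is the one highlighted above, namely that the ``preserves existing colimits'' notion of cocontinuity is closed under composition. This holds precisely because the image under the first functor of a colimiting cocone is again a colimiting cocone, and so forms a legitimate input on which the cocontinuity of the second functor can be invoked; without this intermediate step one could not chain the two hypotheses together, since neither $\Delta$ nor $\Tos$ has all colimits.
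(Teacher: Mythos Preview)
Your proposal is correct and follows exactly the same approach as the paper, which simply states that the result ``follows immediately from the previous two propositions.'' You have merely spelled out in more detail why the composite of two cocontinuous functors (in the ``preserves existing colimits'' sense) is again cocontinuous, which is a reasonable elaboration of what the paper leaves implicit.
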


\begin{proof}
This follows immediately from the previous two propositions.
\end{proof}

\begin{proposition}
\label{prop:delta-dense}
The inclusion $i : \Delta \hookrightarrow \Pos$ is dense.
\end{proposition}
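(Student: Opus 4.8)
The plan is to use the characterization of density from \Cref{prop:dense}: it suffices to show that every poset $P$ is the colimit of the canonical diagram $i \downarrow P \xrightarrow{\pi_\Delta} \Delta \xrightarrow{i} \Pos$. Concretely, the objects of $i \downarrow P$ are monotone maps $i[n] \to P$, i.e. finite non-empty chains in $P$ (with multiplicity, since the maps need not be injective), and I must exhibit $P$ as the colimit of this diagram in $\Pos$.

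First I would recall how colimits in $\Pos$ are computed (as described earlier in the excerpt): take the colimit of the underlying sets, equip it with the smallest preorder making all the coprojections monotone, and then quotient by the induced equivalence relation. The candidate cocone is the obvious one, sending each chain $c : [n] \to P$ to $c$ itself, so that on underlying sets we have a surjection from $\coprod_{c : [n] \to P} \{0, \dots, n\}$ onto $P$ (surjective because every element $p \in P$ is hit by the singleton chain $[0] \to P$ picking out $p$). Thus the underlying set of the colimit is a quotient of $P$, and I would check that distinct points of $P$ are not identified: if $p \neq q$, then the singleton chains at $p$ and $q$ are connected in the colimit preorder only through a zig-zag of chains, so I need the preorder relation to collapse back to the order on $P$.

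The key step is therefore the order-theoretic one: I must show that the preorder on the colimit set, generated by the images of all chains, recovers exactly the partial order $\leq$ on $P$ and identifies no distinct elements. In one direction, if $p \leq q$ in $P$, then the two-element chain $[1] \to P$ with $0 \mapsto p$, $1 \mapsto q$ witnesses $p \leq q$ in the colimit preorder. In the other direction, a generating relation in the colimit arises from some chain $c : [n] \to P$ with $c(j) \leq c(k)$ coming from $j \leq k$ in $[n]$, and monotonicity of $c$ gives $c(j) \leq c(k)$ in $P$; since $\leq$ on $P$ is already transitive, the generated preorder is contained in $\leq$. Hence the colimit preorder equals $\leq$, which is antisymmetric, so the quotient by $\sim$ is trivial and the colimit is $P$ on the nose.

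The main obstacle I anticipate is bookkeeping around the comma category $i \downarrow P$ and verifying that the candidate cocone is genuinely universal rather than merely bijective on underlying sets — that is, confirming that the colimit computed by the recipe (coproduct of sets, smallest preorder, antisymmetric quotient) agrees with $P$ both as a set and as a poset. The set-level surjectivity is easy via singletons, and the hard content is the two-sided comparison of the generated preorder with $\leq$; once that matches and is seen to be antisymmetric, the universal property follows formally, and by \Cref{prop:dense} the nerve $N$ is fully faithful, establishing density.
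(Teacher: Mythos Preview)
Your proposal is correct and follows the same route as the paper: both verify the colimit criterion of \Cref{prop:dense} by exhibiting $P$ as the colimit of its finite chains, with the paper dispatching this in one line (``every poset is the union of its chains'') while you unpack the $\Pos$-colimit recipe explicitly. The only point to tighten is the phrase ``the underlying set of the colimit is a quotient of $P$'': what you actually need (and implicitly use) is that the comparison map from the $\Set$-colimit to $P$ is a bijection, which follows because any element $(c,j)$ of the coproduct with $c(j)=p$ is connected in the comma category to the singleton chain at $p$ via the map $\ord 0 \to \ord n$, $0 \mapsto j$.
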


\begin{proof}
Let $P$ be a poset.
The comma category $i \downarrow P$ consists of:
\begin{itemize}
\item objects: monotone maps of the form
\[
x : \ord n \to P \qquad (n \in \mathbb{N})
\]
which are equivalent to finite chains of $P$.
\item morphisms: commutative triangles of the form
\[
\begin{tikzcd}
\ord n \ar[rr, "f"] \ar[dr, "x"'] && \ord m \ar[dl, "y"] \\
& P &
\end{tikzcd}
\]
which are equivalent to inclusions of chains, i.e.\ $x_i = y_{f(i)}$.
\end{itemize}
It is easy to see that $P$ is the colimit of the diagram $i \downarrow P \to \Delta \hookrightarrow \Pos$ because the colimit is just a union and every poset is equal to the union of its chains.
\end{proof}

Therefore by \Cref{prop:dense}, the nerve functor is fully faithful:
\begin{align*}
N : \Pos &\hookrightarrow [\Delta^{\op}, \Set] \\
P &\mapsto \Pos(i(-), P)
\end{align*}
For every poset $P$, the nerve $NP$ is a simplicial set whose $n$-simplices are the chains of length $n$ in $P$, i.e.\ tuples $(x_0, \dots, x_n)$ such that $x_i \leq x_{i + 1}$ for all $i$.
The face and degeneracy maps are given by applying transitivity and reflexivity:
\begin{align*}
d_i(x_0, \dots, x_n) &= (x_0, \dots, x_{i - 1}, x_{i + 1}, \dots, x_n) \\
s_i(x_0, \dots, x_n) &= (x_0, \dots, x_i, x_i, \dots, x_n)
\end{align*}

\begin{proposition}
The nerve $NP$ is continuous for every poset $P$.
\end{proposition}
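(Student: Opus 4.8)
The plan is to derive continuity of $NP$ directly from the cocontinuity of the inclusion $i$, established in \Cref{prop:delta-cocontinuous}, together with the elementary fact that any representable functor sends colimits to limits. The real content has already been expended in proving cocontinuity; what remains is essentially bookkeeping.

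First I would unfold the definition of a continuous presheaf from \Cref{prop:cocompletion}: I must show that for every diagram $D : \cat J \to \Delta$ admitting a colimit $C$ in $\Delta$, with colimiting cocone $\phi : D \Rightarrow C$, the presheaf $NP$ carries this to a limit cone in $\Set$; concretely, that the canonical comparison map
\[
NP(C) \longrightarrow \lim_{j \in \cat J} NP(D_j)
\]
is a bijection.

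The computation then proceeds in two moves. Since $i$ is cocontinuous, it preserves the colimit, so that $i(C) \cong \colim_{j} i(D_j)$ in $\Pos$, with colimiting cocone $i\phi$. Applying the contravariant hom-functor $\Pos(-, P)$, which carries colimits in $\Pos$ to limits in $\Set$ by the universal property of the colimit (a map out of $\colim_j i(D_j)$ is precisely a compatible family of maps out of the $i(D_j)$), yields
\[
NP(C) = \Pos(i(C), P) \cong \Pos\bigl(\colim_{j} i(D_j),\, P\bigr) \cong \lim_{j} \Pos(i(D_j), P) = \lim_{j} NP(D_j).
\]
This is exactly the required bijection.

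The only point requiring any care is to confirm that the isomorphism produced above is the canonical comparison map induced by $\phi$, rather than some other bijection; this amounts to observing that the same cocone $\phi$ mediates both the universal property of the colimit $i(C)$ and that of the limit $\lim_j NP(D_j)$, which it does by construction. I do not anticipate a genuine obstacle: once cocontinuity of $i$ is in hand, continuity of the nerve is a formal consequence of the fact that representables convert colimits into limits.
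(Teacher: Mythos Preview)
Your proof is correct and follows essentially the same approach as the paper: both arguments factor $NP$ as $\Pos(-,P) \circ i$ and invoke the cocontinuity of $i$ (\Cref{prop:delta-cocontinuous}) together with the fact that contravariant hom-functors send colimits to limits. The paper merely phrases this more tersely as the composite $\Delta^{\op} \xrightarrow{i^{\op}} \Pos^{\op} \xrightarrow{\yo P} \Set$ of two continuous functors, whereas you unfold the argument explicitly on a chosen colimit cone.
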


\begin{proof}
Note that $NP$ is given by the composite
\[
\Delta^{\op} \xrightarrow{i^{\op}} \Pos^{\op} \xrightarrow{\yo P} \Set
\]
We have that $i^{\op}$ is continuous because $i$ is cocontinuous by \Cref{prop:delta-cocontinuous}, and $\yo P$ is continuous because hom-functors preserve limits in the first argument.
\end{proof}

Let $[\Delta^{\op}, \Set]_{\cts}$ be the full subcategory of continuous simplicial sets.
Hence the nerve functor exhibits $\Pos$ as a full subcategory of $[\Delta^{\op}, \Set]_{\cts}$, so we have
\[
N : \Pos \hookrightarrow [\Delta^{\op}, \Set]_{\cts}
\]
We claim that this is essentially surjective and hence an equivalence.
In particular, we will show that every continuous simplicial set arises as the nerve of a poset.
From now on, suppose that $X$ is a continuous simplicial set.

\begin{lemma}
\label{lma:relation}
The map $\ang{d_1, d_0} : X_1 \to X_0 \times X_0$ is injective.
\end{lemma}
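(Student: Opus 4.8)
The plan is to realise $\ord 1$ as a colimit in $\Delta$ of two copies of itself glued along both endpoints, and then feed this colimit through the continuity of $X$ to turn it into a limit in $\Set$ that computes the kernel pair of $\ang{d_1, d_0}$. Concretely, I would take the small diagram $D : \cat J \to \Delta$ consisting of two copies of $\ord 1$ together with two copies of $\ord 0$, where the first $\ord 0$ maps into each $\ord 1$ by $\delta_1$ and the second $\ord 0$ maps into each $\ord 1$ by $\delta_0$. Intuitively this glues the two arrows source-to-source and target-to-target. Note that $\cat J$ only ever refers to the objects $\ord 0$ and $\ord 1$, so this is a genuine diagram in $\Delta$ and no coproducts are needed.

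First I would compute the colimit of $D$ in $\Pos$. Since a partial order has no parallel arrows, identifying the two sources and the two targets collapses both copies of $\ord 1$ onto a single arrow, so the colimit is again $\ord 1$, with both coprojections $\ord 1 \to \ord 1$ equal to the identity. Because $\ord 1$ is already totally ordered and $\Delta$ is full in $\Pos$, every $\Delta$-cocone over $D$ is in particular a $\Pos$-cocone and hence factors uniquely through $\ord 1$ by a map that automatically lies in $\Delta$; thus $\ord 1$ is also the colimit of $D$ in $\Delta$. Applying the continuity of $X$ now yields a canonical isomorphism $X_1 = X(\ord 1) \cong \lim (X \circ D)$, where the limit is taken over $\cat J^{\op}$.

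It then remains to read off both sides of this isomorphism. Unwinding the limit of this finite diagram in $\Set$, the two $\ord 0$-legs contribute exactly the equations coming from $d_1$ and from $d_0$, so the limit is the set of pairs $(x, y) \in X_1 \times X_1$ with $d_1 x = d_1 y$ and $d_0 x = d_0 y$, i.e.\ the kernel pair of $\ang{d_1, d_0}$. On the other hand, since both coprojections of the colimit are the identity, the comparison map $X_1 \to \lim (X \circ D)$ induced by continuity is the diagonal $x \mapsto (x, x)$. An isomorphism whose underlying map is the diagonal forces the kernel pair to coincide with the diagonal, which is precisely the assertion that $\ang{d_1, d_0}$ is injective.

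The only real obstacle is the bookkeeping around which colimit is in play: one must check that the colimit computed in $\Pos$ genuinely is the colimit in $\Delta$, so that continuity applies at all, and then verify that the continuity comparison map is the diagonal rather than some other map, since this is what converts the abstract isomorphism into concrete injectivity. Everything else is a routine unwinding of a finite limit in $\Set$.
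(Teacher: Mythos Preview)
Your proposal is correct and follows essentially the same route as the paper: both use the colimit in $\Delta$ of two copies of $\ord 1$ glued along both endpoints via $\delta_0$ and $\delta_1$, apply continuity of $X$, and read off that the resulting limit in $\Set$ is the kernel pair of $\ang{d_1,d_0}$ with diagonal comparison map, forcing injectivity. The only difference is cosmetic: you justify the colimit by first computing it in $\Pos$ and reflecting it into the full subcategory $\Delta$, whereas the paper simply asserts the colimit in $\Delta$ and then repackages the resulting $\Set$-limit as the pullback of $\ang{d_1,d_0}$ along itself.
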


\begin{proof}
The following diagram is a colimit in $\Delta$:
\[
\begin{tikzcd}[column sep=small]
& \ord 1 & \\
\ord 1 \urar["\id"] && \ord 1 \ular["\id"'] \\
\ord 0 \uar["\delta_1"] \ar[urr, "\delta_1"' very near start] && \ord 0 \ar[ull, "\delta_0" very near start] \uar["\delta_0"']
\end{tikzcd}
\]
Hence $X$ takes it to the following limit diagram in $\Set$:
\[
\begin{tikzcd}[column sep=small]
& X_1 \dlar["\id"'] \drar["\id"] & \\
X_1 \dar["d_1"'] \ar[drr, "d_0" very near start] && X_1 \ar[dll, "d_1"' very near start] \dar["d_0"] \\
X_0 && X_0
\end{tikzcd}
\]
This is equivalent to the following diagram being a pullback:
\[
\begin{tikzcd}[column sep=small]
& X_1 \dlar["\id"'] \drar["\id"] \ar[dd, phantom, "\llcorner" rotate=45, very near start] & \\
X_1 \drar["\ang{d_1, d_0}"'] && X_1 \dlar["\ang{d_1, d_0}"] \\
& X_0 \times X_0 &
\end{tikzcd}
\]
which is equivalent to the map $\ang{d_1, d_0} : X_1 \to X_0 \times X_0$ being injective.
\end{proof}

Hence we can view $X_1$ as a relation on $X_1$.
We write $x \leq_X y$ iff $(x, y) \in X_1$.
We claim that $\leq_X$ is in fact a partial order, and that $X$ is the nerve of $(X_0, \leq_X)$.

\begin{lemma}
\label{lma:chains}
The set $X_n$ is isomorphic to the following set:
\[
\set{(x_0, \dots, x_n) \in X_0^n \mid x_i \leq_X x_{i + 1}}
\]
\end{lemma}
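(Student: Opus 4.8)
The plan is to present $\ord n$ as a colimit of copies of $\ord 1$ in $\Delta$ and then apply continuity of $X$ to turn this colimit into a limit computing $X_n$. Concretely, $\ord n$ is the colimit of its \emph{spine}: the diagram consisting of $n$ copies of $\ord 1$, one for each edge $\set{i, i+1}$, together with $n - 1$ copies of $\ord 0$, one for each interior vertex, where the vertex copy maps by $\delta_0$ into the edge below it (as its top endpoint) and by $\delta_1$ into the edge above it (as its bottom endpoint). Computing this colimit in $\Pos$ simply glues the $n$ edges end to end, yielding the chain $0 \leq 1 \leq \dots \leq n$, which is $\ord n$; since $\ord n$ lies in $\Delta$ and the inclusion $\Delta \hookrightarrow \Pos$ is fully faithful and cocontinuous (\Cref{prop:delta-cocontinuous}), this is also the colimit of the spine in $\Delta$.

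Because $X$ is continuous, it sends this colimit to a limit in $\Set$, exhibiting $X_n$ as the iterated fiber product
\[
X_n \cong X_1 \times_{X_0} X_1 \times_{X_0} \dots \times_{X_0} X_1
\]
of $n$ copies of $X_1$, each pullback taken along $d_0$ on the left factor and $d_1$ on the right. An element is thus a tuple $(e_0, \dots, e_{n-1}) \in X_1^n$ with $d_0(e_{i-1}) = d_1(e_i)$ for $1 \leq i \leq n - 1$. By \Cref{lma:relation} each $e_i$ is uniquely determined by the pair $(d_1(e_i), d_0(e_i))$, which by definition of $\leq_X$ satisfies $d_1(e_i) \leq_X d_0(e_i)$. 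Putting $x_0 = d_1(e_0)$ and $x_{i+1} = d_0(e_i)$, the compatibility conditions say exactly that consecutive edges share the vertex $x_i$, so such tuples correspond bijectively to chains $x_0 \leq_X x_1 \leq_X \dots \leq_X x_n$. This is the asserted set, and the cases $n = 0$ and $n = 1$ are immediate (the latter being \Cref{lma:relation} itself).

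The step I expect to require the most care is the first one: pinning down the spine as a genuine colimit presentation of $\ord n$ in $\Delta$ (equivalently, checking that the gluing pushouts computed in $\Pos$ are antisymmetric and recover $\ord n$), and then matching the face-map identifications against the chain conditions without index errors. An equivalent but lighter route avoids the $n$-fold limit by inducting on $n$ via the single pushout $\ord n \cong \ord{n-1} \sqcup_{\ord 0} \ord 1$, which glues the top vertex of $\ord{n-1}$ to the bottom vertex of $\ord 1$. Continuity then gives $X_n \cong X_{n-1} \times_{X_0} X_1$, and combining the inductive hypothesis with \Cref{lma:relation} appends one compatible edge, extending a chain of length $n - 1$ to one of length $n$.
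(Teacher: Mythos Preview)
Your argument is correct and is essentially the paper's own proof: the paper likewise presents $\ord n$ (for $n\geq 2$) as the colimit of its spine in $\Delta$ and applies continuity of $X$ to obtain the iterated fibre product $X_1 \times_{X_0} \cdots \times_{X_0} X_1$, from which the chain description follows via \Cref{lma:relation}. Your write-up is in fact a bit more careful than the paper's in justifying why the spine colimit lands in $\Delta$ (via \Cref{prop:delta-cocontinuous}) and in spelling out how \Cref{lma:relation} converts the fibre product into tuples in $X_0$; the inductive variant you sketch at the end is a pleasant alternative but not needed.
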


\begin{proof}
This is true by definition for $n = 0, 1$.
Now note that $\ord{n + 2}$ arises as the following colimit in $\Delta$, with $n + 2$ occurrences of $\ord 1$ and $n + 1$ occurrences of $\ord 0$:
\[
\begin{tikzcd}[column sep=small]
&&& \ord{n + 2} &&& \\[4em]
\ord 1 \ar[urrr, bend left, "\delta_{n + 2} \cdots \delta_2" sloped] && \ord 1 \urar["\delta_{n + 2} \cdots \delta_3 \delta_0" sloped] && \ord 1 \ular["\delta_{n + 2} \delta_{n - 1} \cdots \delta_0" sloped] && \ord 1 \ar[ulll, bend right, "\delta_n \cdots \delta_0" sloped] \\
& \ord 0 \ular["\delta_0"] \urar["\delta_1"'] && \cdots \ular["\delta_0"] \urar["\delta_1"'] && \ord 0 \ular["\delta_0"] \urar["\delta_1"'] &
\end{tikzcd}
\]
Hence $X$ takes it to the following limit in $\Set$, so $X_{n + 2}$ has the expected form:
\begin{equation}
\label{eq:colimit}
\tag{$\star$}
\begin{tikzcd}[column sep=small]
&&& X_{n + 2} \ar[dlll, bend right, "d_2 \cdots d_{n + 2}" sloped] \dlar["d_0 d_3 \cdots d_{n + 2}" sloped] \drar["d_0 \cdots d_{n - 1} d_{n + 2}" sloped] \ar[drrr, bend left, "d_0 \cdots d_n" sloped] &&& \\[4em]
X_1 \drar["d_0"'] && X_1 \dlar["d_1"] \drar["d_0"'] && X_1 \dlar["d_1"] \drar["d_0"'] && X_1 \dlar["d_1"] \\
& X_0 && \cdots && X_0 &
\end{tikzcd}
\end{equation}
\end{proof}

\begin{lemma}
\label{lma:transitivity}
The face maps $d_i : X_n \to X_{n - 1}$ are given by
\[
d_i(x_0, \dots, x_n) = (x_0, \dots, x_{i - 1}, x_{i + 1}, \dots, x_n)
\]
In particular, $\leq_X$ must be transitive, as witnessed by $d_1 : X_2 \to X_1$.
\end{lemma}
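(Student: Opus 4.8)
The plan is to compute the face maps explicitly using the vertex decomposition of simplices, and then to read off transitivity as an immediate consequence of the deletion formula together with \Cref{lma:chains}. For each $j$, let $v_j : \ord 0 \to \ord m$ denote the monotone map picking out the element $j$ (with $m$ clear from context). First I would identify, under the isomorphism of \Cref{lma:chains}, the $j$-th entry of the chain corresponding to a simplex $\xi \in X_n$ with its $j$-th vertex $X(v_j)(\xi) \in X_0$. This can be read off the spine decomposition \eqref{eq:colimit}: its legs pick out the $n$ consecutive edges of $\xi$, each an element of $X_1$ whose images under $d_1$ and $d_0$ are its source and target vertices (by the convention $\ang{d_1, d_0}$ fixed in \Cref{lma:relation}); the shared endpoints then assemble into precisely the vertex tuple $(x_0, \dots, x_n)$ with $x_j = X(v_j)(\xi)$.

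Next I would compute $d_i$ purely on vertices. Since $d_i = X(\delta_i)$ is the image of $\delta_i : \ord{n-1} \to \ord n$, functoriality gives, for the $j$-th vertex of $d_i(\xi)$,
\[
X(v_j)\bigl(d_i(\xi)\bigr) = X(\delta_i \circ v_j)(\xi) = X(v_{\delta_i(j)})(\xi),
\]
using that $\delta_i \circ v_j = v_{\delta_i(j)}$ as maps $\ord 0 \to \ord n$. Feeding in the defining formula $\delta_i(j) = j$ for $j < i$ and $\delta_i(j) = j + 1$ for $j \geq i$, the vertices of $d_i(\xi)$ are exactly $x_0, \dots, x_{i-1}, x_{i+1}, \dots, x_n$. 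By the first step these vertices are the entries of the chain corresponding to $d_i(\xi)$, so $d_i$ deletes the $i$-th coordinate, as claimed.

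Finally, transitivity is automatic. Given $x \leq_X y$ and $y \leq_X z$, the triple $(x, y, z)$ is a valid chain and hence corresponds, by \Cref{lma:chains}, to some $\xi \in X_2$. Applying the deletion formula to $d_1 : X_2 \to X_1$ yields $d_1(\xi) = (x, z)$; since $d_1(\xi) \in X_1$, this says exactly $x \leq_X z$ by the definition of $\leq_X$.

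I expect the main obstacle to be the first step: pinning down that the chain entries produced by \Cref{lma:chains} really are the vertices $X(v_j)(\xi)$, and in the correct order, rather than some reindexed or reversed family. This is bookkeeping with the spine maps of \eqref{eq:colimit} and the source/target convention of \Cref{lma:relation}; once that identification is secured, the face-map computation is a one-line application of functoriality and the formula for $\delta_i$, and transitivity falls out for free.
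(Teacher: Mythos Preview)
Your argument is correct and takes a genuinely different route from the paper's. The paper proceeds by repeatedly invoking continuity of $X$: first it exhibits a colimit in $\Delta$ with apex $\ord 2$ whose image in $\Set$ pins down $d_0, d_1, d_2$ on $X_2$ directly (this already gives transitivity); then for larger $n$ it runs a three-way case split on $i$ (the cases $i = 0$, $i$ maximal, and $0 < i < n$), in each case producing a specific pushout in $\Delta$ whose image is a pullback in $\Set$ with one leg a known spine projection from \eqref{eq:colimit}, from which the unknown leg $d_i$ can be read off. Your approach instead observes once and for all that, by \Cref{lma:chains} together with \Cref{lma:relation}, every simplex is determined by its vertex tuple, so it suffices to compute the vertices of $d_i(\xi)$; functoriality of $X$ then reduces this to the elementary identity $\delta_i \circ v_j = v_{\delta_i(j)}$ in $\Delta$, with no case analysis and no further appeal to continuity. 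Your route is shorter and more uniform; the paper's has the minor virtue of making each use of continuity explicit, but your method simply front-loads all the needed continuity into \Cref{lma:chains}, which is a perfectly legitimate economy.
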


\begin{proof}
This holds trivially for $n = 1$.
Now we have the following colimit in $\Delta$:
\[
\begin{tikzcd}[column sep=large]
& \ord 2 & \\[2em]
\ord 1 \urar["\delta_2"] & \ord 1 \uar["\delta_1" description] & \ord 1 \ular["\delta_0"'] \\
\ord 0 \uar["\delta_1"] \urar["\delta_1"' very near start] & \ord 0 \ular["\delta_0" very near start] \urar["\delta_1"' very near start] & \ord 0 \ular["\delta_0" very near start] \uar["\delta_1"']
\end{tikzcd}
\]
Hence $X$ takes it to the following limit diagram in $\Set$:
\[
\begin{tikzcd}[column sep=large]
& X_2 \dlar["d_2"'] \dar["d_1" description] \drar["d_0"] & \\[2em]
X_1 \dar["d_1"'] \drar["d_0"' very near end] & X_1 \dlar["d_1" very near end] \drar["d_0"' very near end] & X_1 \dlar["d_1" very near end] \dar["d_1"] \\
X_0 & X_0 & X_0
\end{tikzcd}
\]
Therefore the result holds for $n = 2$.
Now will show that the face maps for $n > 2$ arise as (co)limits of the face maps for $n = 1, 2$.
We considering three cases:
\begin{enumerate}
\item The following diagram is a pushout in $\Delta$:
\[
\begin{tikzcd}[column sep=4em]
\ord 0 \dar["\delta_0"'] \rar["\delta_{n + 2} \cdots \delta_1"] & \ord{n + 2} \dar["\delta_0"] \\
\ord 1 \rar["\delta_{n + 3} \cdots \delta_2"'] & \ord{n + 3} \arrow[ul, phantom, "\ulcorner" very near start]
\end{tikzcd}
\]
Hence the following diagram is a pullback in $\Set$:
\[
\begin{tikzcd}[column sep=4em]
X_{n + 3} \arrow[dr, phantom, "\lrcorner" very near start] \dar["d_0"'] \rar["d_2 \cdots d_{n + 3}"] & X_1 \dar["d_0"] \\
X_{n + 2} \rar["d_1 \cdots d_{n + 2}"'] & X_0
\end{tikzcd}
\]
The bottom map is one of the colimit legs in \eqref{eq:colimit}, so the left map must be
\[
(x_0, \dots, x_{n + 2}) \mapsto (x_1, \dots, x_{n + 2})
\]
\item The following diagram is a pushout in $\Delta$:
\[
\begin{tikzcd}[column sep=4em]
\ord 0 \dar["\delta_1"'] \rar["\delta_{n + 1} \cdots \delta_0"] & \ord{n + 2} \dar["\delta_{n + 3}"] \\
\ord 1 \rar["\delta_{n + 1} \cdots \delta_0"'] & \ord{n + 3} \arrow[ul, phantom, "\ulcorner" very near start]
\end{tikzcd}
\]
Hence the following diagram is a pullback in $\Set$:
\[
\begin{tikzcd}[column sep=4em]
X_{n + 3} \arrow[dr, phantom, "\lrcorner" very near start] \dar["d_{n + 3}"'] \rar["d_0 \cdots d_{n + 1}"] & X_1 \dar["d_1"] \\
X_{n + 2} \rar["d_0 \cdots d_{n + 1}"'] & X_0
\end{tikzcd}
\]
The bottom map is one of the colimit legs in \eqref{eq:colimit}, so the left map must be
\[
(x_0, \dots, x_{n + 2}) \mapsto (x_0, \dots, x_{n + 1})
\]
\item The following diagram is a pushout in $\Delta$ for $0 < i < n + 3$:
\[
\begin{tikzcd}[column sep=8em]
\ord 1 \dar["\delta_1"'] \rar["\delta_{n + 2} \cdots \delta_{i + 1} \delta_{i - 2} \cdots \delta_0"] & \ord{n + 2} \dar["\delta_i"] \\
\ord 2 \rar["\delta_{n + 3} \cdots \delta_{i + 2} \delta_{i - 2} \cdots \delta_0"'] & \ord{n + 3} \arrow[ul, phantom, "\ulcorner" very near start]
\end{tikzcd}
\]
Hence the following diagram is a pullback in $\Set$:
\[
\begin{tikzcd}[column sep=8em]
X_{n + 3} \arrow[dr, phantom, "\lrcorner" very near start] \dar["d_i"'] \rar["d_0 \cdots d_{i - 2} d_{i + 2} \cdots d_{n + 3}"] & X_2 \dar["d_1"] \\
X_{n + 2} \rar["d_0 \cdots d_{i - 2} d_{i + 1} \cdots d_{n + 2}"'] & X_1
\end{tikzcd}
\]
The bottom map is one of the colimit legs in \eqref{eq:colimit}, so the left map must be
\[
(x_0, \dots, x_{n + 2}) \mapsto (x_0, \dots, x_{i - 1}, x_{i + 1}, \dots, x_{n + 2})
\]
\end{enumerate}
\end{proof}

\begin{lemma}
\label{lma:reflexivity}
The degeneracy maps $s_i : X_n \to X_{n + 1}$ are given by
\[
s_i(x_0, \dots, x_n) = (x_0, \dots, x_i, x_i, \dots, x_n)
\]
In particular, $\leq_X$ must be reflexive, as witnessed by $s_0 : X_0 \to X_1$.
\end{lemma}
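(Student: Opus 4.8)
The plan is to read off the degeneracy maps from the simplicial identities, using that \Cref{lma:transitivity} has already identified the face maps as deletions; unlike the previous lemma, this requires no further appeal to continuity of $X$, since the relevant identities hold in every simplicial set. The identities I would invoke are the duals of $\sigma_i \delta_i = \id = \sigma_i \delta_{i+1}$, namely
\[
d_i \circ s_i = \id \qquad\text{and}\qquad d_{i+1} \circ s_i = \id,
\]
with $s_i : X_n \to X_{n+1}$ and $d_i, d_{i+1} : X_{n+1} \to X_n$.

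I would fix $(x_0, \dots, x_n) \in X_n$, write its image under $s_i$ as $(y_0, \dots, y_{n+1}) \in X_{n+1}$, and solve the two constraints. By \Cref{lma:transitivity}, $d_i$ deletes the entry $y_i$ and $d_{i+1}$ deletes $y_{i+1}$. The first identity $d_i \circ s_i = \id$ gives
\[
(y_0, \dots, y_{i-1}, y_{i+1}, \dots, y_{n+1}) = (x_0, \dots, x_n),
\]
forcing $y_k = x_k$ for $k < i$, then $y_{i+1} = x_i$, and $y_k = x_{k-1}$ for $k > i+1$. The second identity $d_{i+1} \circ s_i = \id$ gives
\[
(y_0, \dots, y_i, y_{i+2}, \dots, y_{n+1}) = (x_0, \dots, x_n),
\]
which determines the one remaining entry $y_i = x_i$ and agrees with the first constraint on all others. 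Together these yield
\[
s_i(x_0, \dots, x_n) = (x_0, \dots, x_{i-1}, x_i, x_i, x_{i+1}, \dots, x_n),
\]
as required.

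Reflexivity then falls out by specialising to $n = 0$ and $i = 0$: the map $s_0 : X_0 \to X_1$ sends $x$ to $(x, x)$, and membership $(x, x) \in X_1$ is by definition the statement $x \leq_X x$. I do not expect any genuine obstacle here; the only point to check is that the two constraints are consistent, which is precisely the observation that they overlap on the duplicated pair $(y_i, y_{i+1}) = (x_i, x_i)$. All the real work was done in \Cref{lma:transitivity}, and this lemma is the short algebraic consequence that extracts reflexivity.
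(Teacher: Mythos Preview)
Your argument is correct, and it is genuinely different from the paper's for the general step. The paper handles $n=0$ exactly as you do (the commuting triangle $\ang{d_1,d_0}\circ s_0=\operatorname{diag}$ is just the two identities $d_0 s_0=\id=d_1 s_0$), but for $n\geq 1$ it appeals once more to continuity of $X$: it exhibits a pushout in $\Delta$ relating $\sigma_0:\ord 1\to\ord 0$ to $\sigma_i:\ord{n+2}\to\ord{n+1}$, turns it into a pullback in $\Set$, and reads off $s_i$ from the known description of $s_0$ and the colimit legs of \eqref{eq:colimit}. Your route instead stays entirely within the simplicial identities $d_i s_i=\id=d_{i+1}s_i$, which hold in any simplicial set, and uses only the tuple description of $X_{n+1}$ from \Cref{lma:chains} together with the deletion formula for $d_i$ from \Cref{lma:transitivity}. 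This is more elementary and makes the useful point that no \emph{additional} continuity is needed here beyond what was already spent on \Cref{lma:chains} and \Cref{lma:transitivity}; the paper's approach, by contrast, keeps the presentation uniform with the surrounding lemmas, each of which proceeds by the same pushout-to-pullback mechanism.
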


\begin{proof}
This holds for $n = 0$ because the following commutes:
\[
\begin{tikzcd}
X_0 \rar["s_0"] \drar["\operatorname{diag}"'] & X_1 \dar["\ang{d_1, d_0}"] \\
& X_0 \times X_0
\end{tikzcd}
\]
Now the following diagram is a pushout in $\Delta$:
\[
\begin{tikzcd}[column sep=8em]
\ord 1 \dar["\sigma_0"'] \rar["\delta_{n + 2} \cdots \delta_{i + 2} \delta_{i - 1} \cdots \delta_0"] & \ord{n + 2} \dar["\sigma_i"] \\
\ord 0 \rar["\delta_{n + 1} \cdots \delta_{i + 1} \delta_{i - 1} \cdots \delta_0"'] & \ord{n + 1} \arrow[ul, phantom, "\ulcorner" very near start]
\end{tikzcd}
\]
Hence the following diagram is a pullback in $\Set$:
\[
\begin{tikzcd}[column sep=8em]
X_{n + 1} \arrow[dr, phantom, "\lrcorner" very near start] \dar["s_i"'] \rar["d_0 \cdots d_{i - 1} d_{i + 1} \cdots d_{n + 1}"] & X_0 \dar["s_0"] \\
X_{n + 2} \rar["d_0 \cdots d_{i - 1} d_{i + 2} \cdots d_{n + 2}"'] & X_1
\end{tikzcd}
\]
The bottom map is one of the colimit legs in \eqref{eq:colimit}, so the left map must be
\[
(x_0, \dots, x_{n + 2}) \mapsto (x_1, \dots, x_i, x_i, \dots, x_{n + 2})
\]
\end{proof}

\begin{lemma}
\label{lma:antisymmetry}
The relation $\leq_X$ is antisymmetric.
\end{lemma}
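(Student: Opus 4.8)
The plan is to derive antisymmetry from one further colimit in $\Delta$, namely the one that collapses a \emph{directed bigon} to a point. I would take the diagram $D$ in $\Delta$ built from two copies of $\ord 1$ and two copies of $\ord 0$, glued by face maps so that the two intervals run between the same pair of endpoints but in opposite directions:
\[
\begin{tikzcd}[column sep=small]
& \ord 0 \dlar["\delta_1"'] \drar["\delta_0"] & \\
\ord 1 && \ord 1 \\
& \ord 0 \ular["\delta_0"] \urar["\delta_1"'] &
\end{tikzcd}
\]
First I would verify that the colimit of $D$ in $\Delta$ is $\ord 0$, with both colimit legs equal to $\sigma_0 : \ord 1 \to \ord 0$. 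Indeed, a cocone given by $f, g : \ord 1 \to C$ satisfies $f(0) = g(1)$ and $f(1) = g(0)$, so monotonicity yields $f(0) \leq f(1) = g(0) \leq g(1) = f(0)$; antisymmetry of the totally ordered $C$ then forces $f$ and $g$ to be constant, which gives the unique factorisation through $\sigma_0$. This is the crux of the argument, and I expect it to be the main obstacle: it is precisely here that antisymmetry of the objects of $\Delta$ enters, and the difficulty is recognising that this crossed gluing is the colimit that transports that property to $X$.

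Next, since $X$ is continuous it sends this colimit to a limit in $\Set$. Using $X(\delta_i) = d_i$ and $X(\sigma_0) = s_0$, this limit works out to
\[
X_0 \;\cong\; \set{(\alpha, \beta) \in X_1 \times X_1 \mid d_1\alpha = d_0\beta \text{ and } d_0\alpha = d_1\beta},
\]
with the comparison isomorphism carrying $z \in X_0$ to the pair $(s_0 z, s_0 z)$. By \Cref{lma:reflexivity} we have $s_0 z = (z, z)$, so every element of this limit has the form $((z, z), (z, z))$.

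Finally I would feed in the hypothesis. If $x \leq_X y$ and $y \leq_X x$, then $\alpha = (x, y)$ and $\beta = (y, x)$ both lie in $X_1$, and $d_1 \alpha = x = d_0 \beta$ together with $d_0 \alpha = y = d_1 \beta$ show that $(\alpha, \beta)$ belongs to the limit above. Hence $(\alpha, \beta) = ((z, z), (z, z))$ for some $z \in X_0$, and reading off the first coordinate gives $(x, y) = (z, z)$, so $x = y$. This would establish that $\leq_X$ is antisymmetric.
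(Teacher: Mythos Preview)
Your proof is correct and is essentially the same argument as the paper's: both use the colimit in $\Delta$ of the ``directed bigon'' (two copies of $\ord 1$ glued along two copies of $\ord 0$ with opposite orientations, collapsing to $\ord 0$ via $\sigma_0$), apply continuity of $X$ to obtain the limit description $X_0 \cong \set{(\alpha,\beta)\in X_1\times X_1 \mid d_1\alpha=d_0\beta,\ d_0\alpha=d_1\beta}$, and read off antisymmetry. You are somewhat more explicit than the paper in verifying that $\ord 0$ really is the colimit and in spelling out the final deduction, but the route is identical.
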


\begin{proof}
The following diagram is a colimit in $\Delta$:
\[
\begin{tikzcd}[column sep=small]
& \ord 0 & \\
\ord 1 \urar["\sigma_0"] && \ord 1 \ular["\sigma_0"'] \\
\ord 0 \uar["\delta_1"] \ar[urr, "\delta_0"' very near start] && \ord 0 \ar[ull, "\delta_0" very near start] \uar["\delta_1"']
\end{tikzcd}
\]
Hence $X$ takes it to the following limit diagram in $\Set$:
\[
\begin{tikzcd}[column sep=small]
& X_0 \dlar["s_0"'] \drar["s_0"] & \\
X_1 \dar["d_1"'] \ar[drr, "d_0"' very near end] && X_1 \ar[dll, "d_0" very near end] \dar["d_1"] \\
X_0 && X_0
\end{tikzcd}
\]
In particular, this means that $X_0$ is isomorphic to the set of pairs $(x, y)$ such that $x \leq_X y$ and $y \leq_X x$, via the map $x \mapsto (x, x)$.
Hence $\leq_X$ is antisymmetric.
\end{proof}

\begin{proposition}
The relation $\leq_X$ is a partial order.
\end{proposition}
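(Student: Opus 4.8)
The plan is to observe that being a partial order means satisfying exactly three axioms — reflexivity, transitivity, and antisymmetry — and that each of these has already been secured by a preceding lemma. So the proposition is pure assembly, and I would simply cite the three results in turn.

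First I would note that reflexivity is witnessed by the degeneracy map $s_0 : X_0 \to X_1$: by \Cref{lma:reflexivity}, $s_0(x) = (x, x)$, so under the identification of $X_1$ with the relation $\leq_X$ this says $x \leq_X x$ for every $x \in X_0$. Next, transitivity is witnessed by the face map $d_1 : X_2 \to X_1$: by \Cref{lma:chains} an element of $X_2$ is a chain $(x, y, z)$ with $x \leq_X y$ and $y \leq_X z$, and by \Cref{lma:transitivity} the map $d_1$ sends it to $(x, z)$, so $x \leq_X z$. Finally, antisymmetry is precisely the statement of \Cref{lma:antisymmetry}.

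Having verified all three axioms, I would conclude that $\leq_X$ is a partial order. I do not expect any obstacle here: the genuine content lies in the earlier lemmas, where the simplicial identities and the continuity of $X$ were used to force the face and degeneracy maps into their expected form, and this proposition only records that those computations jointly establish the three defining properties.
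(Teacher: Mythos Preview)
Your proposal is correct and matches the paper's own proof, which simply cites \Cref{lma:transitivity,lma:reflexivity,lma:antisymmetry} without further elaboration. Your additional unpacking of how each lemma yields the corresponding axiom is accurate and entirely in line with the paper's intent.
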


\begin{proof}
This follows from \Cref{lma:transitivity,lma:reflexivity,lma:antisymmetry}.
\end{proof}

\begin{theorem}
The nerve $N : \Pos \to [\Delta^{\op}, \Set]_{\cts}$ is an equivalence.
\end{theorem}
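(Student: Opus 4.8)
The plan is to deduce the theorem by combining the two halves of an equivalence: full faithfulness, which is already in hand, and essential surjectivity, towards which the preceding lemmas have been building. Since the inclusion $i : \Delta \hookrightarrow \Pos$ is dense by \Cref{prop:delta-dense}, \Cref{prop:dense} tells us that the nerve $N : \Pos \to [\Delta^{\op}, \Set]$ is fully faithful, and we have already observed that it lands inside the full subcategory $[\Delta^{\op}, \Set]_{\cts}$ of continuous simplicial sets. A fully faithful functor remains fully faithful when its codomain is restricted to any full subcategory containing its image, so $N : \Pos \to [\Delta^{\op}, \Set]_{\cts}$ is fully faithful. It therefore suffices to prove that this functor is essentially surjective.

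To that end, fix a continuous simplicial set $X$. \Cref{lma:relation} lets us regard $X_1$ as a relation $\leq_X$ on the set $X_0$, and the preceding proposition shows that $\leq_X$ is reflexive, transitive, and antisymmetric. Hence $P \coloneqq (X_0, \leq_X)$ is a genuine partially ordered set, so $NP$ is defined, and the goal becomes the production of a natural isomorphism $X \cong NP$.

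By definition, the $n$-simplices of $NP$ are exactly the chains in $P$, that is, the tuples $(x_0, \dots, x_n)$ with $x_i \leq_X x_{i+1}$, and its face and degeneracy maps are the deletion and duplication operations recorded just before the present statement. \Cref{lma:chains} supplies, for each $n$, a bijection between $X_n$ and this set of chains, so $X$ and $NP$ agree levelwise. \Cref{lma:transitivity,lma:reflexivity} then show that, under these bijections, the face maps $d_i$ and degeneracy maps $s_i$ of $X$ coincide with those of $NP$. Since a simplicial set is completely determined by its sets of simplices together with the operators $d_i$ and $s_i$, these levelwise bijections assemble into an isomorphism of simplicial sets $X \cong NP$. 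This establishes essential surjectivity, and together with full faithfulness it proves that $N$ is an equivalence.

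The bulk of the difficulty has already been discharged in the lemmas, so I expect no serious obstacle in the assembly itself; the one point that warrants care is checking that the identifications of \Cref{lma:chains} are the \emph{same} ones with respect to which \Cref{lma:transitivity,lma:reflexivity} compute the face and degeneracy maps, so that the comparison is genuinely simplicial. Because all three lemmas are phrased in terms of the same colimit decomposition \eqref{eq:colimit} of $\ord{n+2}$, matching the generating operators $d_i$ and $s_i$ is enough, and no separate verification of the full simplicial identities is needed beyond what is already built into $X$ and $NP$ being simplicial sets.
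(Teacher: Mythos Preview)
Your proposal is correct and follows essentially the same approach as the paper: full faithfulness from density, then essential surjectivity by assigning to a continuous simplicial set $X$ the poset $P=(X_0,\leq_X)$ and invoking \Cref{lma:chains} for the levelwise bijections and \Cref{lma:transitivity,lma:reflexivity} for compatibility with the simplicial operators. The only difference is that you spell out more of the bookkeeping (restriction of the codomain, why matching $d_i$ and $s_i$ suffices) than the paper does.
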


\begin{proof}
We already know that $N$ is fully faithful.
It is also essentially surjective: for every continuous simplicial set $X$, we have that $X \cong NP$ for $P = (X_0, \leq_X)$.
The isomorphism exists by \Cref{lma:chains} and is natural by \Cref{lma:transitivity,lma:reflexivity}.
\end{proof}

\begin{corollary}
$\Pos$ is the free conservative cocompletion of $\Delta$.
\end{corollary}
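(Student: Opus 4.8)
The plan is to deduce this as a formal consequence of the equivalence established in the preceding theorem together with Kelly's characterization recalled in \Cref{prop:cocompletion}. By that proposition, the free conservative cocompletion of $\Delta$ is the category $[\Delta^{\op}, \Set]_{\cts}$ of continuous simplicial sets, equipped with the corestricted Yoneda embedding $\yo : \Delta \to [\Delta^{\op}, \Set]_{\cts}$. The preceding theorem provides an equivalence $N : \Pos \xrightarrow{\sim} [\Delta^{\op}, \Set]_{\cts}$, so it remains only to transport the universal property along $N$.

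First I would check that $N$ is compatible with the two embeddings, i.e.\ that $N \circ i \cong \yo$. This is immediate from the full faithfulness of the inclusion $i$: for each object $\ord n$ we have $N(i(\ord n)) = \Pos(i(-), i(\ord n)) \cong \Delta(-, \ord n) = \yo(\ord n)$, and this isomorphism is natural in $\ord n$. Thus the triangle of embeddings commutes up to natural isomorphism.

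Next I would verify that the pair $(\Pos, i)$ satisfies \Cref{def:cocompletion}. The category $\Pos$ is cocomplete, and $i$ is fully faithful (being a full subcategory inclusion) and cocontinuous by \Cref{prop:delta-cocontinuous}, so the data are as required. For the universal property, given a cocomplete category $\cat D$ and a cocontinuous functor $F : \Delta \to \cat D$, \Cref{prop:cocompletion} yields an essentially unique cocontinuous $G : [\Delta^{\op}, \Set]_{\cts} \to \cat D$ with $G \circ \yo \cong F$. Setting $\widetilde{F} = G \circ N$ gives a cocontinuous functor, being a composite of cocontinuous functors since the equivalence $N$ preserves all colimits, and it satisfies $\widetilde{F} \circ i = G \circ N \circ i \cong G \circ \yo \cong F$. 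Essential uniqueness would follow by composing any competitor with a quasi-inverse of $N$ and invoking the uniqueness clause of \Cref{prop:cocompletion}.

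Since every substantive ingredient — cocontinuity, density, and essential surjectivity of the nerve — has already been established, I expect this final step to be purely formal, with no real obstacle beyond the bookkeeping of carrying a universal property across an equivalence. The one point that genuinely deserves care is the compatibility $N \circ i \cong \yo$ above, since the free conservative cocompletion is a property of the pair $(\widetilde{\cat C}, I)$ and not of the underlying category alone; without this check one would only know that $\Pos$ and $[\Delta^{\op}, \Set]_{\cts}$ are abstractly equivalent, not that the inclusion $i$ is the universal cocontinuous extension.
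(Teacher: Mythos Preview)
Your proposal is correct and follows the same approach as the paper: deduce the corollary from \Cref{prop:cocompletion} together with the equivalence $N : \Pos \xrightarrow{\sim} [\Delta^{\op}, \Set]_{\cts}$ established in the preceding theorem. The paper in fact states the corollary without any proof, treating it as immediate; your explicit verification that $N \circ i \cong \yo$ and your transport of the universal property across the equivalence simply spell out the bookkeeping that the paper leaves implicit.
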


Therefore $\Pos$ satisfies the universal property of \Cref{def:cocompletion}.
That is, for every cocomplete category $\cat C$ and cocontinuous functor $F : \Delta \to \cat C$, there exists an essentially unique cocontinuous functor $\widetilde{F} : \Pos \to \cat C$ extending $F$:
\[
\begin{tikzcd}
\Delta \drar["F"'] \rar[hook] & \Pos \dar[dashed, "\widetilde{F}"] \\
& \cat C
\end{tikzcd}
\]
According to Kelly~\cite{kelly1982basic}, the functor $\widetilde{F}$ can be given by a Left Kan extension:
\[
\widetilde{F}(P) \coloneqq \colim(i \downarrow P \xrightarrow{\pi_\Delta} \Delta \xrightarrow{F} \cat C)
\]
Intuitively, this means that to compute $\widetilde{F}(P)$ for a poset $P$, we consider all finite chains of $P$, and take the colimit in $\cat C$ of the images of these chains under $F$.

\bibliographystyle{plainurl}
\bibliography{Posets}

\begin{thebibliography}{10}

\bibitem{dorn2018associative}
Christoph Dorn.
\newblock Associative $n$-categories.
\newblock {\em arXiv preprint arXiv:1812.10586}, 2018.

\bibitem{fiore1996enrichment}
Marcelo~P Fiore.
\newblock Enrichment and representation theorems for categories of domains and continuous functions.
\newblock {\em Available at the author's website}, 1996.

\bibitem{kelly1982basic}
GM~Kelly.
\newblock {\em Basic concepts of enriched category theory}, volume~64 of {\em London Mathematical Society Lecture Note Series}.
\newblock Cambridge University Press, 1982.

\bibitem{lambek1966completions}
Joachim Lambek.
\newblock {\em Completions of categories}, volume~24 of {\em Lecture Notes in Mathematics}.
\newblock Springer, 1966.

\bibitem{mimram2013categorical}
Samuel Mimram and Cinzia Di~Giusto.
\newblock A categorical theory of patches.
\newblock {\em Electronic Notes in Theoretical Computer Science}, 298:283--307, 2013.

\bibitem{reutter2019high}
David Reutter and Jamie Vicary.
\newblock High-level methods for homotopy construction in associative $n$-categories.
\newblock In {\em 34th Annual ACM/IEEE Symposium on Logic in Computer Science}, pages 1--13. IEEE, 2019.

\bibitem{sarti2023posetal}
Chiara Sarti and Jamie Vicary.
\newblock Posetal diagrams for logically-structured semistrict higher categories.
\newblock {\em arXiv preprint arXiv:2305.11637}, 2023.

\bibitem{szpilrajn1930extension}
Edward Szpilrajn.
\newblock Sur l'extension de l'ordre partiel.
\newblock {\em Fundamenta Mathematicae}, 1(16):386--389, 1930.

\bibitem{tataru2023layout}
Calin Tataru and Jamie Vicary.
\newblock A layout algorithm for higher-dimensional string diagrams.
\newblock {\em arXiv preprint arXiv:2305.06938}, 2023.

\bibitem{tataru2024theory}
Calin Tataru and Jamie Vicary.
\newblock The theory and applications of anticolimits.
\newblock {\em arXiv preprint arXiv:2401.17076}, 2024.

\bibitem{ulmer1968properties}
Friedrich Ulmer.
\newblock Properties of dense and relative adjoint functors.
\newblock {\em Journal of Algebra}, 8(1):77--95, 1968.

\bibitem{velebil2002remark}
Ji{\v{r}}{\'i} Velebil and Ji{\v{r}}{\'i} Ad{\'a}mek.
\newblock A remark on conservative cocompletions of categories.
\newblock {\em Journal of Pure and Applied Algebra}, 168(1):107--124, 2002.

\end{thebibliography}

\end{document}